\let\oldmarginpar\marginpar
\renewcommand\marginpar[1]{\oldmarginpar[\raggedleft\footnotesize #1]%
{\raggedright\footnotesize #1}}
   \def\MR#1{}
\newcommand{\W}{\mathcal W}
\newcommand{\vol}{{\rm vol}}
\newcommand{\specvol}{{\rm Spec}_{\vol}}
\newcommand{\specdet}{{\rm Spec}_{\rm det}}
\newcommand{\specjones}{{\rm Spec}_{\rm JP}}
\newcommand{\specQ}{{\rm Spec}_q}
\newcommand{\specKH}{{\rm Spec}_{\rm KH}}
\newcommand{\voct}{{v_{\rm oct}}}
\theoremstyle{plain}
\newtheorem{theorem}{Theorem}[section]
\newtheorem{corollary}[theorem]{Corollary}
\newtheorem{prop}[theorem]{Proposition}
\newtheorem{conjecture}[theorem]{Conjecture}
\newtheorem*{namedtheorem}{\theoremname}
\newcommand{\theoremname}{testing}
\theoremstyle{definition}
\newtheorem{define}[theorem]{Definition}
\newtheorem{question}[theorem]{Question}
\newtheorem{remark}[theorem]{Remark}
\title[Density spectra for knots]{Density spectra for knots}
\author[A.\ Champanerkar]{Abhijit Champanerkar}
\address{Department of Mathematics, College of Staten Island \& The Graduate Center, City University of New York, New York, NY}
\email{abhijit@math.csi.cuny.edu}
\author[I. \ Kofman]{Ilya Kofman}
\address{Department of Mathematics, College of Staten Island \& The Graduate Center, City University of New York, New York, NY}
\email{ikofman@math.csi.cuny.edu}
\author[J. \ Purcell]{Jessica S.\ Purcell}
\address{Department of Mathematics, Brigham Young University, Provo, UT}
\email{jpurcell@math.byu.edu}
\begin{document}

\begin{abstract}
We recently discovered a relationship between the volume density
spectrum and the determinant density spectrum for infinite sequences
of hyperbolic knots.  Here, we extend this study to new quantum
density spectra associated to quantum invariants, such as Jones
polynomials, Kashaev invariants and knot homology.  We also propose
related conjectures motivated by geometrically and diagrammatically
maximal sequences of knots.
\end{abstract}

\maketitle

\centerline{\em In celebration of J\'ozef Przytycki's 60th birthday}

\section{Volume and determinant density spectra}
In \cite{ckp:gmax}, we studied the asymptotic behavior of two basic
quantities, one geometric and one diagrammatic, associated to an alternating
hyperbolic link $K$: The {\em volume density} of $K$ is defined as
$\vol(K)/c(K)$, and the {\em determinant density} of $K$ is defined as
$2\pi\log\det(K)/c(K)$.

For any diagram of a hyperbolic link $K$, an upper bound for the
hyperbolic volume $\vol(K)$ was given by D.~Thurston by decomposing
$S^3-K$ into octahedra at crossings of $K$.  Any hyperbolic octahedron
has volume bounded above by the volume of the regular ideal
octahedron, $\voct \approx 3.66386$.  So if $c(K)$ is the crossing
number of $K$, then
\begin{equation}\label{eq:dylan}
\frac{\vol(K)}{c(K)} \leq \voct. 
\end{equation} 

The following conjectured upper bound for the determinant density is
equivalent to a conjecture of Kenyon \cite{kenyon} for planar graphs.
We have verified this conjecture for all knots up to 16 crossings.
\begin{conjecture}[\cite{ckp:gmax}]\label{conj:diagmax}
If $K$ is any knot or link, $\displaystyle \ \frac{2\pi\log\det(K)}{c(K)} \leq \voct.$
\end{conjecture}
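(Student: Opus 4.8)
The plan is to reduce the statement to a purely graph-theoretic bound on spanning trees and then to attack that bound spectrally. First I would pass to a minimal-crossing diagram $D$ of $K$ and form its Tait (checkerboard) graph $\Gamma$, which is planar with $|E(\Gamma)| = c(D) = c(K)$ edges. By the weighted matrix-tree theorem applied to a Goeritz matrix of $D$ (equivalently, by Thistlethwaite's spanning-tree expansion of the Kauffman bracket), the determinant is a signed sum over spanning trees of $\Gamma$, so
\begin{equation*}
\det(K) \;\le\; \tau(\Gamma),
\end{equation*}
where $\tau(\Gamma)$ is the number of spanning trees, with equality when $D$ is a reduced alternating diagram. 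Thus it suffices to prove the sharp planar-graph inequality $\log\tau(\Gamma) \le \frac{\voct}{2\pi}\,|E(\Gamma)|$.

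Next I would identify the target constant. Writing $G_{\mathrm{Cat}} \approx 0.91597$ for Catalan's constant, one has $\voct = 4\,G_{\mathrm{Cat}}$, so $\voct/(2\pi) = 2G_{\mathrm{Cat}}/\pi \approx 0.58312$. This is exactly the per-edge spanning-tree entropy of the square lattice $\mathbb{Z}^2$: its entropy per vertex is $4G_{\mathrm{Cat}}/\pi$, and it carries two edges per vertex. Hence the conjecture asserts that $\mathbb{Z}^2$---realized knot-theoretically by the infinite square weave, the extremal diagrammatically maximal sequence---maximizes spanning-tree entropy per edge among all planar graphs.

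For the entropy bound itself I would first reduce, by a Fekete subadditivity argument, to $\mathbb{Z}^2$-periodic planar graphs, for which the per-edge entropy is given by a Burton--Pemantle transfer-current (Mahler measure) integral
\begin{equation*}
\lim_{n} \frac{\log\tau(\Gamma_n)}{|E(\Gamma_n)|} \;=\; \frac{1}{|E|}\cdot\frac{1}{(2\pi)^2}\int_{[0,2\pi]^2}\log\det\widehat{L}(\theta_1,\theta_2)\,d\theta_1\,d\theta_2,
\end{equation*}
where $\widehat{L}$ is the Fourier-transformed Laplacian of a fundamental domain and $|E|$ counts its edges. The goal is then to show this functional is maximized by the square lattice. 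The main obstacle is that planarity must enter in an essential way: the naive spectral bound from AM-GM on the Laplacian eigenvalues, $\tau(\Gamma) \le \frac{1}{|V|}\bigl(\tfrac{2|E|}{|V|-1}\bigr)^{|V|-1}$, gives only per-edge entropy $\le \log 2 \approx 0.693$ for $4$-regular graphs, which exceeds $2G_{\mathrm{Cat}}/\pi$ and so is too weak.

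Because no convexity or spectral relaxation respecting planarity is apparent, I expect the crux to be an extremality argument internal to planar graphs---for instance, proving monotonicity of the entropy functional under series-parallel and $\Delta$--$Y$ reductions, or a face-by-face local bound saturated only by $4$-valent structure, followed by a limiting comparison driving an arbitrary planar graph toward the square lattice. This is precisely the content of Kenyon's conjecture, which remains open; consistent with this, one can at present only verify the inequality numerically, as the authors report for all knots up to $16$ crossings.
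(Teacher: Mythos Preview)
The statement you were asked to prove is labeled a \emph{conjecture} in the paper, and the paper offers no proof of it; the authors state only that it is equivalent to a conjecture of Kenyon for planar graphs and that they have verified it numerically for all knots up to $16$ crossings. So there is no ``paper's own proof'' to compare against.

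Your write-up is not a proof either, and to your credit you say so explicitly at the end. What you have given is a correct and well-informed reduction: passing to the Tait graph, using the spanning-tree bound $\det(K)\le\tau(\Gamma)$ with $|E(\Gamma)|=c(K)$, and identifying the target constant $\voct/(2\pi)=2G_{\mathrm{Cat}}/\pi$ as the per-edge spanning-tree entropy of $\mathbb{Z}^2$. This is exactly the content of the paper's remark that Conjecture~\ref{conj:diagmax} is equivalent to Kenyon's conjecture, and your observation that the naive AM--GM Laplacian bound gives only $\log 2$ per edge (too weak, and not using planarity) is a genuine diagnosis of where the difficulty lies. The speculative program you sketch---periodic reduction, Mahler-measure integrals, local moves toward the square lattice---is reasonable but, as you acknowledge, does not close the gap. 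In short: your analysis matches the paper's framing, and both stop at the same open problem.
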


This motivates a more general study of the spectra for volume and determinant density.

\begin{define}\label{def:spec}
  Let $\mathcal{C}_{\rm vol} = \{\vol(K)/c(K)\}$ and $\mathcal{C}_{\rm
    det}= \{2\pi\log\det(K)/c(K)\}$ be the sets of respective
  densities for all hyperbolic links $K$.  We define $\specvol=
  \mathcal{C}'_{\rm vol}$ and $\specdet=\mathcal{C}'_{\rm det}$ as
  their derived sets (set of all limit points).
\end{define}

Equation (\ref{eq:dylan}) and Conjecture \ref{conj:diagmax} imply
$$\specvol, \ \specdet \subset [0,\voct]$$
Twisting on two strands of an alternating link gives $0$ as a limit
point of both densities: 
$\displaystyle 0\in\specvol \cap \specdet$.
Moreover, by the upper volume bound established in \cite{AdamsBound}, $\voct$ cannot occur as a
volume density of any finite link; i.e., $\voct\notin\mathcal{C}_{\rm vol}$.
However, $\voct$ is the volume density of the \emph{infinite weave} $\W$, the infinite alternating link with the
infinite square grid projection graph (see \cite{ckp:gmax}).  

To study $\specvol$ and $\specdet$, we consider sequences of knots and links.
We say that a sequence of links $K_n$ with $c(K_n)\to \infty$ is \emph{geometrically maximal} if
$\displaystyle \lim_{n\to\infty}\frac{\vol(K_n)}{c(K_n)}=\voct. $
Similarly, it is {\em diagrammatically maximal} if
$\displaystyle \lim_{n\to\infty}\frac{2\pi\log\det(K_n)}{c(K_n)}=\voct. $
In \cite{ckp:gmax}, we found many families
of geometrically and diagrammatically maximal knots and links that are
related to the infinite weave $\W$.

\begin{define}\label{def:folner}
Let $G$ be any possibly infinite graph. For any finite subgraph $H$,
the set $\partial H$ is the set of vertices of $H$ that share an edge
with a vertex not in $H$.  We let $|\cdot|$ denote the number of
vertices in a graph.  An exhaustive nested sequence of
connected subgraphs, $\{H_n\subset G:\; H_n\subset H_{n+1},\; \cup_n
H_n=G\}$, is a {\em F{\o}lner sequence} for $G$ if
$$ \lim_{n\to\infty}\frac{|\partial H_n|}{|H_n|}=0. $$
\end{define}

For any link diagram $K$, let $G(K)$ be the projection graph of
the diagram. Let $G(\W)$ be the projection graph of $\W$, which is the
infinite square grid.  We will need a particular diagrammatic
condition called a {\em cycle of tangles}, which is defined in
\cite{ckp:gmax}.  For an example, see Figure \ref{fig:celtic}.

\begin{figure}
\begin{tabular}{ccc}
  \includegraphics[height=1.2in]{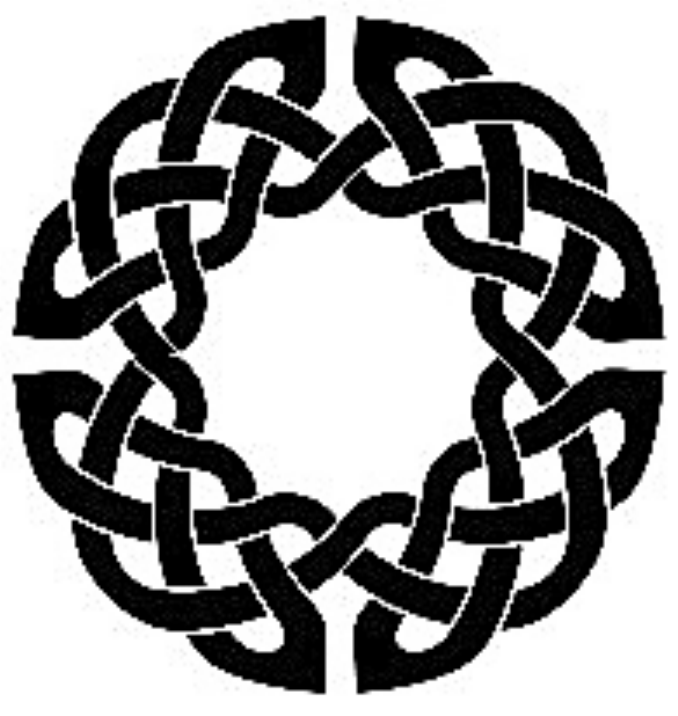} & \hspace*{0.5in} &
 \includegraphics[height=1.2in]{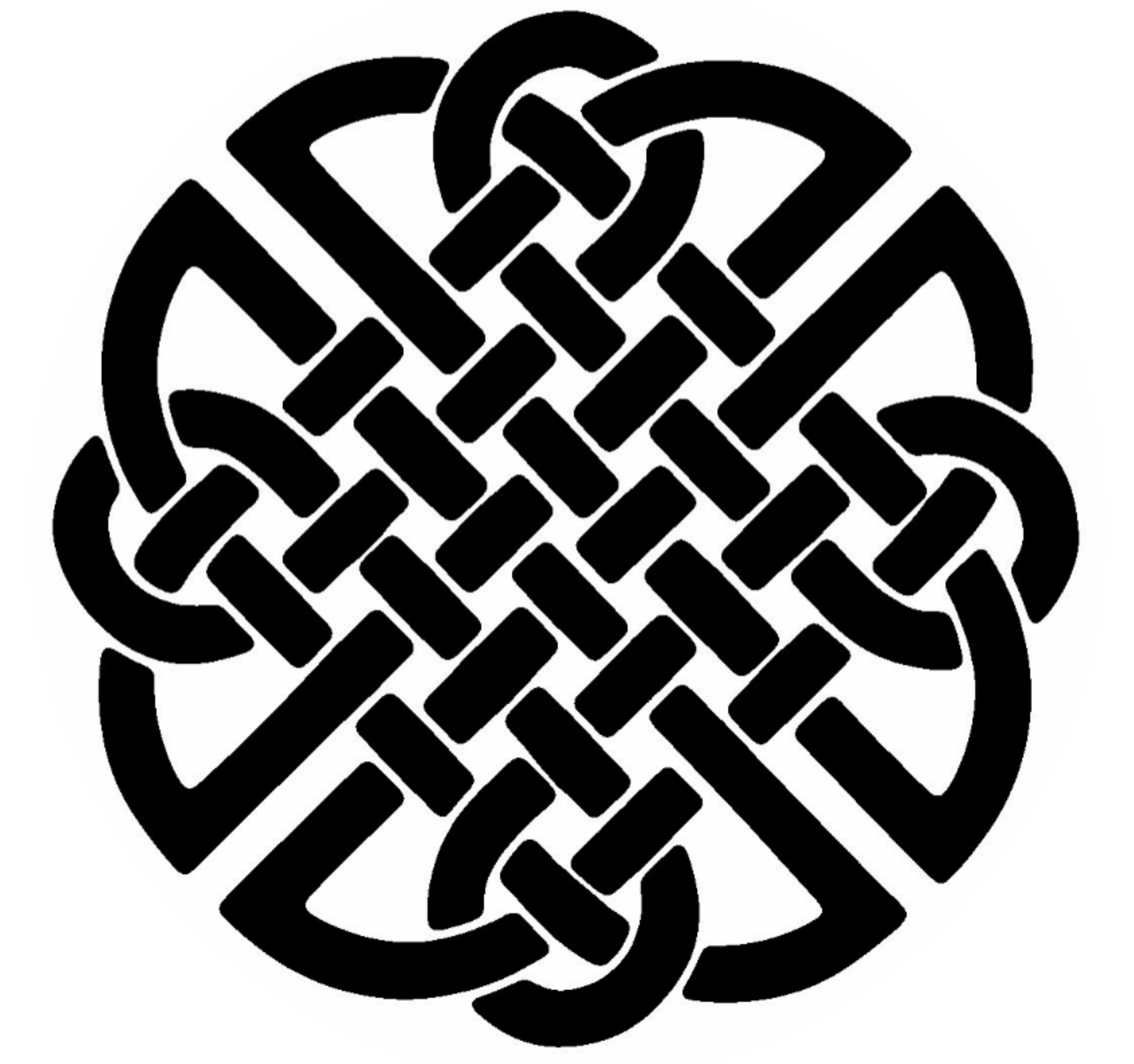} \\
(a) & \qquad & (b) \\
\end{tabular}
\caption{(a) A Celtic knot diagram that has a cycle of tangles.
  (b) A Celtic knot diagram with no cycle of tangles, which could be in a sequence that
satisfies conditions of Theorem~\ref{thm:geommax}.}
  \label{fig:celtic}
\end{figure}

\begin{theorem}[\cite{ckp:gmax}]\label{thm:geommax}
Let $K_n$ be any sequence of hyperbolic alternating links that contain no cycle of tangles, such that
\begin{enumerate}
\item there are subgraphs $G_n\subset G(K_n)$ that form a F\o lner sequence for $G(\W)$, and
\item $\lim\limits_{n\to\infty} |G_n|/ c(K_n) = 1$.
\end{enumerate}
Then $K_n$ is geometrically maximal:
$\displaystyle \lim_{n\to\infty}\frac{\vol(K_n)}{c(K_n)}=\voct. $
\end{theorem}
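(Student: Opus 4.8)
Since (\ref{eq:dylan}) already gives $\vol(K_n)/c(K_n)\le\voct$ for every $n$, the entire content of the statement is the matching lower bound, namely $\liminf_{n\to\infty}\vol(K_n)/c(K_n)\ge\voct$. The plan is to upgrade the combinatorial (F{\o}lner) convergence of the diagrams toward the square grid $G(\W)$ into a \emph{geometric} convergence of the complete hyperbolic structures on $S^3 - K_n$, based at interior crossings, toward the complement $M_\W$ of the infinite weave. The weave complement is the geometric model we measure against: it is hyperbolic, is tiled by regular ideal octahedra with exactly one per crossing of the grid, and hence has volume density precisely $\voct$. Everything reduces to showing that deep inside the grid-like regions of $K_n$ the geometry is asymptotically that of $M_\W$.

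First I would use hypothesis (1) to extract, for each $n$, the set of \emph{interior} crossings of $G_n$ --- those at combinatorial distance at least $R$ from $\bdy G_n$ --- and observe that, by the definition of a F{\o}lner sequence, the proportion of crossings of $G_n$ failing to be interior (for any fixed $R$) tends to $0$; combined with hypothesis (2), $|G_n|/c(K_n)\to 1$, this makes the interior crossings a fraction of $c(K_n)$ tending to $1$. The no-cycle-of-tangles hypothesis enters precisely here: it guarantees that each alternating diagram is prime and twist-reduced, so that no essential annulus or compression obstructs the expected ideal polyhedral decomposition, and hence the combinatorics of the diagram near a crossing genuinely dictates the local geometry.

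The core step is to prove that, based at interior crossings $x_n$, the manifolds $(S^3 - K_n,\,x_n)$ converge geometrically to $(M_\W,\,x_\infty)$. The mechanism is that an interior crossing sees the identical square-grid pattern out to combinatorial radius $R$ as does a crossing of $\W$, so the associated ideal polyhedral pieces agree combinatorially on that scale; since the local injectivity radius is bounded below, standard thick-thin compactness produces a based geometric limit, which is a complete hyperbolic structure on a manifold carrying the weave's combinatorics. By Mostow--Prasad rigidity of the finite-volume quotient $M_\W/\Z^2$, this complete structure is unique and equals the regular-octahedral one, forcing the limit to be $M_\W$. Letting $n\to\infty$ and then $R\to\infty$ promotes this to honest geometric convergence on arbitrarily large neighborhoods of $x_n$.

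Finally, geometric convergence yields, for each fixed $R$, nearly isometric copies of the $R$-ball of $M_\W$ transplanted around each interior crossing of $K_n$; since that ball has volume density tending to $\voct$ as $R\to\infty$, a F{\o}lner averaging over the interior crossings --- which by the first two paragraphs carry asymptotically all of the crossings --- gives $\liminf_{n\to\infty}\vol(K_n)/c(K_n)\ge\voct-\epsilon$ for every $\epsilon>0$. I expect the main obstacle to be exactly this core geometric-convergence step: controlling the geometry near $\bdy G_n$ and at the crossings of $K_n$ lying outside $G_n$ so that no definite volume is lost there, and organizing the transplanted neighborhoods so that each interior crossing is counted once while boundary contributions remain $o(c(K_n))$. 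This bookkeeping is precisely what the F{\o}lner condition is engineered to control.
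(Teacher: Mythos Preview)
This theorem is not proved in the present paper; it is quoted from the authors' companion paper \cite{ckp:gmax}, and no argument for it appears here. There is therefore no proof in this paper against which to compare your proposal.

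For what it is worth, your sketch is a plausible high-level strategy, but it differs in flavor from the argument actually given in \cite{ckp:gmax}. There the lower volume bound is obtained not via based geometric limits and Mostow--Prasad rigidity of $M_{\mathcal W}/\Z^2$, but through explicit estimates on the ideal polyhedral (checkerboard) decomposition of the alternating link complement: over the grid-like portion $G_n$ the two checkerboard polyhedra contain a right-angled region combinatorially matching that of $\mathcal W$, and concrete volume bounds for such right-angled pieces (in the spirit of Atkinson's estimates and the Agol--Storm--Thurston guts machinery) yield roughly $|G_n|\cdot\voct$ from below. The no-cycle-of-tangles hypothesis is used to ensure the checkerboard surfaces behave well enough for that machinery. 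Your geometric-convergence route could in principle succeed, but the step you correctly flag as the main obstacle --- promoting combinatorial agreement on a large ball to genuine geometric agreement, uniformly in $n$, and then packaging the overlapping balls into a non-overcounting volume estimate --- is exactly what the polyhedral approach sidesteps by working directly with a decomposition in which each crossing contributes one identifiable piece.
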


\begin{theorem}[\cite{ckp:gmax}]\label{thm:diagmax}
Let $K_n$ be any sequence of alternating links such that
\begin{enumerate}
\item there are subgraphs $G_n\subset G(K_n)$ that form a F\o lner sequence for $G(\W)$, and
\item $\lim\limits_{n\to\infty} |G_n|/ c(K_n) = 1$.
\end{enumerate}
Then $K_n$ is diagrammatically maximal:
$\displaystyle \lim_{n\to\infty}\frac{2\pi\log\det(K_n)}{c(K_n)} = \voct. $
\end{theorem}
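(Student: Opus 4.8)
The plan is to convert the statement into one about spanning trees and then to invoke the spanning-tree entropy of the square grid. Recall that for a connected reduced alternating diagram the determinant equals the number of spanning trees $\tau(\Gamma)$ of a checkerboard (Tait) graph $\Gamma$, and that the edges of $\Gamma$ are in bijection with the crossings, so $c(K)$ equals the number of edges of $\Gamma$. Writing $\beta$ for Catalan's constant, one has $\voct = 4\beta$, and the spanning-tree entropy of $\ZZ^2$ is the classical value $4\beta/\pi$, so that $\log\tau \sim (4\beta/\pi)\,|V|$ along square-grid patches. Since the Tait graph of $\W$ is exactly $\ZZ^2$ and a square-grid patch on $N$ edges has $\sim N/2$ vertices, it therefore suffices to prove
\[
\lim_{n\to\infty}\frac{\log\tau(\Gamma_n)}{c(K_n)} \;=\; \frac{2\beta}{\pi},
\]
where $\Gamma_n$ denotes the Tait graph of $K_n$; multiplying by $2\pi$ then returns $\voct$.

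For the lower bound I would use monotonicity of $\tau$ under passing to connected spanning subgraphs. Hypotheses (1) and (2) produce, inside $\Gamma_n$, the checkerboard subgraph $\Gamma_n'$ of the F\o lner patch $G_n$, which is a square-grid patch up to a boundary of size $|\partial G_n| = o(|G_n|)$. Because $\Gamma_n$ is connected, I can adjoin to $\Gamma_n'$ a tree of edges bridging the remaining vertices; these bridges are forced into every spanning tree of the resulting connected spanning subgraph, so that subgraph has exactly $\tau(\Gamma_n')$ spanning trees and hence $\tau(\Gamma_n)\ge\tau(\Gamma_n')$. The grid asymptotics give $\log\tau(\Gamma_n')\sim (2\beta/\pi)\,|G_n|$, and hypothesis (2) gives $|G_n|/c(K_n)\to 1$, whence $\liminf_n \frac{2\pi\log\det(K_n)}{c(K_n)}\ge\voct$.

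The upper bound is the crux, and here I would appeal to the convergence of normalized spanning-tree counts under Benjamini--Schramm (local) convergence. Hypotheses (1) and (2) say precisely that a crossing chosen uniformly at random lies, with probability tending to $1$, arbitrarily deep inside a grid patch; hence the rooted graphs $(\Gamma_n,\text{root})$ converge in the local sense to $(\ZZ^2,0)$. By Lyons' theorem that $\frac{\log\tau(\Gamma_n)}{|V(\Gamma_n)|}$ is continuous along such sequences, the limit equals the tree entropy $4\beta/\pi$ of $\ZZ^2$, which matches the lower bound and pins the value exactly. I expect the main obstacle to be exactly the hypotheses of this convergence theorem: the non-grid crossings and the sets $\partial G_n$, though a vanishing fraction, may create vertices of unbounded degree in $\Gamma_n$, so in place of a uniform degree bound one must verify a uniform-integrability (bounded average degree) condition and check that the local limit carries no defect on the boundary. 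I would control these using the F\o lner condition $|\partial G_n|/|G_n|\to 0$ together with condition (2) to bound the number and the total degree of the exceptional vertices by $o(c(K_n))$, so that they contribute nothing to the exponential growth rate. This is where the genuine work lies, since the naive estimate --- that a bounded fraction of extra edges changes $\tau$ by a bounded factor --- is false, as a single added edge can multiply the spanning-tree count by the length of the cycle it creates.
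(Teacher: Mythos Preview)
The paper does not contain a proof of this theorem: it is quoted verbatim from the authors' earlier paper \cite{ckp:gmax}, so there is no in-text argument to compare against. What the present paper does display is Proposition~\ref{prop:lyons}, a special case of Lyons' continuity theorem for spanning-tree entropy, which it uses for a different result (Theorem~\ref{thm:alt}(b)). Your proposal---reduce to $\tau$ of the Tait graph, identify $\voct=4\beta$ with $2\pi$ times the per-edge tree entropy $2\beta/\pi$ of $\ZZ^2$, and then invoke a Lyons/Benjamini--Schramm convergence statement---is precisely in this spirit and is almost certainly the skeleton of the proof in \cite{ckp:gmax}.

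Two points on your execution. First, the hypotheses speak of the \emph{projection} graph $G(K_n)$ (4-valent, vertices $=$ crossings), whereas your argument runs through the Tait graph $\Gamma_n$ (vertices $=$ shaded regions, edges $=$ crossings); you should make explicit that the Tait graph of $\W$ is again a copy of $\ZZ^2$ and that a F{\o}lner patch in the projection grid induces one in the Tait grid, with $|G_n|$ counting edges of the induced Tait patch. Second, you correctly isolate the real difficulty: Proposition~\ref{prop:lyons} as stated carries a uniform degree bound, and nothing in the hypotheses prevents a single complementary region of $K_n$ from having large degree in $\Gamma_n$. Your suggested fix---replace the degree bound by a bounded-average-degree/uniform-integrability condition, which does follow from (2) since the total number of non-grid edges is $o(c(K_n))$---is exactly what the stronger versions of Lyons' theorem allow, and is where the work in \cite{ckp:gmax} presumably goes. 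Your lower bound via monotonicity of $\tau$ on connected spanning subgraphs is clean and needs no further comment.
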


Many families of knots and links are both geometrically and
diagrammatically maximal.  For example, weaving knots are alternating
knots with the same projection as torus knots, and are both
geometrically and diagrammatically maximal \cite{ckp:weaving, ckp:gmax}.
These results attest to the non-triviality of $\specvol \cap
\specdet$:
\begin{corollary}\label{cor:v8}
$\{0, \voct\} \subset \specvol \cap \specdet$.
\end{corollary}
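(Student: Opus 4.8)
The plan is to establish the four memberships $0,\voct\in\specvol$ and $0,\voct\in\specdet$ separately, since $\{0,\voct\}\subset\specvol\cap\specdet$ only requires each of $0$ and $\voct$ to lie in both spectra. The containment $0\in\specvol\cap\specdet$ is essentially recorded in the discussion preceding the corollary: introducing a twist region on two parallel strands of a fixed alternating link and letting the number of twists grow produces links with $c(K_n)\to\infty$, while $\vol(K_n)$ stays bounded (the twist region contributes a piece of bounded volume) and $\det(K_n)$ grows only linearly in the number of twists, so that $2\pi\log\det(K_n)/c(K_n)\to 0$ as well. These densities are positive and tend to $0$, so $0$ is a genuine limit point of each density set, and I would simply cite this construction.

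The substance is $\voct\in\specvol\cap\specdet$, and here the plan is to produce a single sequence of links simultaneously satisfying the hypotheses of Theorem~\ref{thm:geommax} and Theorem~\ref{thm:diagmax}. The natural candidates are the weaving knots $W(p,q)$, the alternating links sharing the standard closed-braid projection of the torus link $T(p,q)$. First I would fix the diagrams: the projection graph $G(W(p,q))$ is a $p\times q$ square grid on the torus, with exactly $(p-1)q$ crossings. For a sequence with $p_n,q_n\to\infty$, I would choose $G_n\subset G(W(p_n,q_n))$ to be a large, genuinely planar rectangular block of this grid; as its side lengths grow, these blocks are nested exhausting pieces of the infinite square grid $G(\W)$ with $|\partial G_n|/|G_n|\to 0$, hence form a F{\o}lner sequence, verifying hypothesis (1). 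For hypothesis (2), since essentially the entire diagram is grid, the blocks can be chosen so that $|G_n|/c(W(p_n,q_n))\to 1$. With $W(p_n,q_n)$ alternating, hyperbolic (for suitable $p_n,q_n$, e.g.\ $p_n\ge 3$), and containing no cycle of tangles, both theorems apply and give
$$\lim_{n\to\infty}\frac{\vol(W(p_n,q_n))}{c(W(p_n,q_n))}=\voct=\lim_{n\to\infty}\frac{2\pi\log\det(W(p_n,q_n))}{c(W(p_n,q_n))}.$$

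Finally I would promote these limits to the statement that $\voct$ is a \emph{limit point} of each density set. For the volume density this is immediate: by the bound of \cite{AdamsBound} recorded above, $\voct\notin\mathcal{C}_{\rm vol}$, so every term $\vol(K_n)/c(K_n)$ is strictly below $\voct$, and a sequence converging to $\voct$ from below exhibits $\voct$ as a limit point, giving $\voct\in\specvol$. For the determinant density I would note that the values $2\pi\log\det(W(p_n,q_n))/c(W(p_n,q_n))$ are not eventually constant equal to $\voct$, so they accumulate at but differ from $\voct$, whence $\voct\in\specdet$. The main obstacle is the geometric-combinatorial verification of the two F{\o}lner hypotheses for the explicit family: checking that the chosen grid blocks genuinely form a F{\o}lner sequence for $G(\W)$ and that the boundary contribution is negligible against the crossing number so that $|G_n|/c\to 1$, together with confirming hyperbolicity and the absence of a cycle of tangles. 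This bookkeeping is exactly the input that lets Theorems~\ref{thm:geommax} and~\ref{thm:diagmax} fire; everything after that is formal. In a write-up I would in fact cite \cite{ckp:weaving,ckp:gmax}, where these verifications for weaving knots are already carried out, rather than redo them.
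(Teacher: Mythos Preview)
Your proposal is correct and follows essentially the same route as the paper: the paper records (without a formal proof block) that twisting on two strands gives $0$ as a common limit point, and that weaving knots are both geometrically and diagrammatically maximal by \cite{ckp:weaving, ckp:gmax}, which yields $\voct$ as a common limit point. Your write-up is more explicit about the F{\o}lner verification and about why the limits are genuine accumulation points, but the underlying argument is the same.
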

It is an interesting problem to understand the sets $\specvol$,
$\specdet$, and $\specvol \cap \specdet$, and to explicitly describe
and relate their elements.  Below, we prove a new result that
$\specvol$ and $\specdet$ contain many interesting elements.  In
Section \ref{sec:quantum}, we extend these ideas to spectra related to
quantum invariants of knots and links.


\subsection{Volume and determinant density spectra from alternating links}

For any reduced alternating diagram $D$ of a hyperbolic alternating
link $K$, Adams \cite{adams:augmented} recently defined the following
notion of a {\em generalized augmented link} $J$.  Take an
unknotted component $B$ that intersects the projection sphere of $D$ in
exactly one point in each of two non-adjacent regions of $D$.  Then
$J=K\cup B$.  In \cite[Theorem~2.1]{adams:augmented}, Adams proved
that any such generalized augmented link is hyperbolic.

\begin{theorem}\label{thm:alt}
For any hyperbolic alternating link $K$, 
\begin{itemize}
\setlength\itemsep{0.5em}
\item[(a)] if $K \cup B$ is any generalized augmented alternating link, $\displaystyle \vol(K\cup B)/c(K)\in\specvol$,
\item[(b)] $\displaystyle 2\pi\log\det(K)/c(K)\in\specdet$.
\end{itemize}
\end{theorem}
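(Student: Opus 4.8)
The plan is to treat both parts by the template behind the authors' geometrically and diagrammatically maximal sequences: exhibit links $K_n$ assembled from $n$ ``periods,'' each period modeled on $K$ (respectively on $K\cup B$), so that the relevant invariant and the crossing number both scale like $n$ and the density converges to the stated ratio. Since the limiting densities will be \emph{distinct} from the target (approached strictly), the target is a genuine limit point and hence lies in the derived set.

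For (a), I would realize $S^3-(K\cup B)$ as one period. As $B$ is unknotted, $S^3-B$ is a solid torus $V$ containing $K$, so $S^3-(K\cup B)=V-K$. The $n$-fold cyclic cover of $V$ unwrapping its core direction is a solid torus $V_n$ whose core is the lift $\widetilde B$; the preimage of $K$ is a chain $\widetilde K_n$ of $n$ copies of $K$, and $V_n-\widetilde K_n$ is the $n$-fold cyclic cover $M_n$ of $M=S^3-(K\cup B)$, so $\vol(M_n)=n\,\vol(K\cup B)$. Re-closing $V_n$ into $S^3$ is a Dehn filling of the single $\widetilde B$-cusp of $M_n$ along a slope whose length grows linearly with $n$; this produces $S^3-K_n$, where $K_n$ is the cyclically repeated alternating link with $c(K_n)=n\,c(K)$. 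By Menasco's theorem $K_n$ is hyperbolic (reduced, prime, and not a torus link for large $n$), and the Thurston--Neumann--Zagier estimate bounds the filling defect by some $\epsilon_n$ with $\epsilon_n\to 0$. Hence
$$\frac{\vol(K_n)}{c(K_n)}=\frac{n\,\vol(K\cup B)-\epsilon_n}{n\,c(K)}\longrightarrow\frac{\vol(K\cup B)}{c(K)},$$
with the approximation strict at every stage, so $\vol(K\cup B)/c(K)\in\specvol$. This is exactly the geometric-convergence mechanism of Theorem~\ref{thm:geommax} and the weaving-knot results of \cite{ckp:weaving,ckp:gmax}, now applied with $K\cup B$ as the quotient of a single period.

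For (b), the algebraic input is multiplicativity of the determinant: $\det(\#^n K)=\det(K)^n$ while $c(\#^n K)=n\,c(K)$ for alternating $K$, so each connected sum has determinant density exactly $v:=2\pi\log\det(K)/c(K)$. This already places $v$ in $\mathcal{C}_{\rm det}$ (it is the density of $K$ itself); the content is to realize $v$ as a \emph{limit} of determinant densities of \emph{hyperbolic} alternating links. I would do so by hyperbolizing the summing spheres: couple consecutive copies of $K$ by prime ``weak couplings'' (or augment each summing sphere in Adams' sense and Dehn fill), producing hyperbolic alternating $K_n$ with $c(K_n)=n\,c(K)+o(n)$ whose per-period determinant growth rate tends to $\det(K)$, so that the determinant densities converge to $v$ from above. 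Analyzing $\det$ through $|H_1|$ of the double branched cover, where connected sum becomes connected sum of $3$-manifolds and the cyclic/filling picture parallels part (a), is the natural tool for controlling this growth rate.

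The main obstacle is precisely this last step. The multiplicative behavior $\det(K)^n$ that pins the density to $v$ is intrinsically tied to cut vertices of the Tait graph, i.e.\ to connected sums, which are never hyperbolic; any genuinely hyperbolic coupling strictly increases the per-period spanning-tree growth rate, so a \emph{fixed} coupling yields a limit strictly larger than $v$, while a careless coupling (for instance a long twist region) drives the density to an unrelated value such as $0$. The heart of the argument is therefore to choose couplings whose strength degenerates toward the connected-sum limit as $n\to\infty$, keeping the per-period determinant growth rate equal to $\det(K)$ and the per-period crossing count equal to $c(K)$ in the limit; establishing this controlled, doubly-indexed limit is where the real work lies.
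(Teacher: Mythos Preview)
Your treatment of part (a) is essentially the paper's proof: build the $n$--periodic alternating link $K^n$ by cyclically concatenating $n$ copies of the tangle $T$ obtained from cutting $K$ along the disc bounded by $B$, so that $c(K^n)=n\,c(K)$, and then control $\vol(K^n)$ by comparing $S^3-K^n$ with the $n$--fold cyclic cover of $S^3-(K\cup B)$ via a Dehn filling estimate. The paper invokes the explicit two-sided bound of \cite{fkp:symmetric} rather than the generic Neumann--Zagier estimate, but the mechanism is identical.

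Part (b), however, has a genuine gap, and the difficulty you identify is self-inflicted. You try to realise the target density by \emph{hyperbolising the connect sums} $L^n=\#^n K$, and you correctly observe that any fixed prime coupling pushes the per-period spanning-tree growth rate strictly above $\det(K)$, forcing a delicate doubly-indexed limit you do not know how to control. The paper bypasses this entirely: it reuses the \emph{same} sequence $K^n$ from part (a), which is already reduced alternating and hyperbolic, and computes $\det(K^n)$ by comparing Tait graphs. The key observation is purely combinatorial: $G(L^n)$ is a chain of $n$ copies of $G(K)$ glued at cut vertices, while $G(K^n)$ is obtained from $G(L^n)$ by identifying the two end vertices into one, so $G(L^n)\subset G(K^{n+1})$ and $|G(L^n)|=|G(K^n)|+1$. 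A result of Lyons on spanning-tree entropy (stated in the paper as Proposition~\ref{prop:lyons}, a special case of \cite[Corollary~3.8]{Lyons}) then gives
\[
\lim_{n\to\infty}\frac{\log\tau(G(K^n))}{|G(K^n)|}=\lim_{n\to\infty}\frac{\log\tau(G(L^n))}{|G(L^n)|},
\]
and since $\tau(G(L^n))=\det(K)^n$ and $c(K^n)=c(L^n)=n\,c(K)$, the determinant density of $K^n$ converges to $2\pi\log\det(K)/c(K)$. No weak couplings, no degenerating families, no double limits are needed; the single identification of two vertices is an asymptotically negligible perturbation of the spanning-tree count, and Lyons' theorem is exactly the tool that makes this precise.
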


\begin{proof}[Proof of part (a).]

View $K$ as a knot in the solid torus $S^3-B$. Cut along the disk bounded by $B$ (cutting $K$ each time $K$ intersects the disk bounded by $B$), obtaining a tangle $T$. Let $K^n$ denote the $n$--periodic reduced alternating link with quotient $K$, formed by taking $n$ copies of $T$ joined in an $n$--cycle of tangles as in Figure~\ref{fig:cycle-tangles}.  Thus,
$K^1=K$ and $c(K^n)=n\cdot c(K)$.

\begin{figure}
  \input{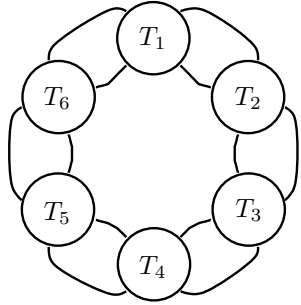}
  \caption{A 6--cycle of 2--tangles.}
  \label{fig:cycle-tangles}
\end{figure}

Let $B$ also denote the central axis of rotational symmetry of $K^n$. Then \cite[Theorem~3.1]{fkp:symmetric}, using results of \cite{fkp:volume}, implies that
\[ n\left( 1-\frac{2\sqrt{2}\,\pi^2}{n^2}\right)^{3/2} \vol(K\cup B) \: \leq \:
\vol(K^n) \: \leq \: n\,\vol(K\cup B).
\]

Therefore,
\[
\lim_{n\to\infty}\frac{\vol(K^n)}{c(K^n)} = \lim_{n\to\infty}\frac{n\cdot\vol(K\cup B)}{n\cdot c(K)} = \frac{\vol(K\cup B)}{c(K)}. 
\]
This completes the proof of part (a).
\end{proof}

For the proof of part (b), we recall some notation. Any alternating link $K$ is determined up to mirror image by its Tait graph $G(K)$, the planar checkerboard graph for which a vertex is assigned to every shaded region and an edge to every crossing of $K$. Thus, $e(G)=c(K)$.  Let $\tau(G)$ denote the number of spanning trees of $G$.  For any alternating knot, $\tau(G)=\det(K)$, which is the determinant of $K$ \cite{thistlethwaite}.  

We will need the following special case of \cite[Corollary~3.8]{Lyons}. Let $V(G)$ denote the set of vertices of $G$, and let $|G|$ denote the number of vertices.
\begin{prop}\label{prop:lyons}
Given $d>0$, let $G_n$ be any sequence of finite connected graphs with degree at most $d$ such that $\displaystyle \lim_{n\to\infty}\frac{\log\tau(G_n)}{|G_n|} = h$.
If $G_n'$ is a sequence of connected subgraphs of $G_n$ such that 
\[
\lim_{n\to\infty}\frac{\# \{x \in V(G_n'):\; \deg_{G_n'}(x)=\deg_{G_n}(x)\} }{|G_n|} = 1,
\]
then \ 
$\displaystyle \lim_{n\to\infty}\frac{\log\tau(G_n')}{|G_n'|} = h$.
\end{prop}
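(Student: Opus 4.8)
The plan is to recognize $\log\tau(G_n)/|G_n|$ as a quantity governed entirely by the \emph{local weak limit} (Benjamini--Schramm limit) of the sequence $G_n$, and then to argue that the degree-preservation hypothesis forces $G_n$ and $G_n'$ to have the same local weak limit, whence their entropy limits must agree. Recall the framework behind the cited result of Lyons: to a finite graph $G$ one associates, for each $r$, the probability measure $U_G^r$ on isomorphism classes of rooted graphs obtained by recording the radius-$r$ ball $B_G(x,r)$ around a uniformly random vertex $x\in V(G)$. A sequence converges in the Benjamini--Schramm sense if $U_{G_n}^r$ converges for every $r$, the limit being a unimodular random rooted graph with some law $\mu$; the content of Lyons' theorem is that, under uniformly bounded degree, $\log\tau(G_n)/|G_n|\to h(\mu)$, the \emph{tree entropy} of $\mu$ (expressible through the expected spectral measure of the graph Laplacian at the root). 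Since the space of such laws $\mu$ is compact under the bounded-degree assumption, every subsequence admits a Benjamini--Schramm convergent further subsequence.

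First I would show that degree preservation propagates to balls. Call a vertex of $G_n$ \emph{bad} if it is either absent from $V(G_n')$ or satisfies $\deg_{G_n'}\neq\deg_{G_n}$ there; by hypothesis the number of bad vertices equals $|G_n|-\#\{x\in V(G_n'):\deg_{G_n'}(x)=\deg_{G_n}(x)\}=o(|G_n|)$, and in particular $|G_n'|/|G_n|\to 1$. Call $x\in V(G_n')$ \emph{$r$-good} if no vertex of $G_n$ within distance $r$ of $x$ is bad; for such $x$ the rooted balls $B_{G_n}(x,r)$ and $B_{G_n'}(x,r)$ are isomorphic. Every non-$r$-good vertex lies within distance $r$ (in $G_n$) of some bad vertex, and since all degrees are at most $d$ each bad vertex has at most $1+d+\cdots+d^r$ vertices within distance $r$ of it; hence the number of non-$r$-good vertices is at most $(1+d+\cdots+d^r)\cdot o(|G_n|)=o(|G_n|)$ for each fixed $r$. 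Using also $|G_n'|/|G_n|\to 1$ to reconcile the two normalizations, this shows that for every fixed $r$ the total-variation distance between the ball-distributions $U_{G_n}^r$ and $U_{G_n'}^r$ tends to $0$.

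It then follows that $G_n$ and $G_n'$ share the same local weak limit along any subsequence. To conclude, suppose $\log\tau(G_n')/|G_n'|$ did not converge to $h$; choose a subsequence along which it converges to some $h'\neq h$ (the limit is finite by the degree bound), then a further subsequence along which $G_n'$ Benjamini--Schramm converges to a law $\mu'$, and a yet further subsequence along which $G_n$ converges to some $\mu$. The total-variation estimate forces $\mu=\mu'$, and the cited theorem then gives $h'=h(\mu')=h(\mu)=h$, a contradiction. The purely combinatorial neighborhood-propagation estimate is routine; the genuine obstacle, and the reason this is quoted from Lyons rather than proved here, is the analytic fact that tree entropy is continuous under local weak convergence assuming only bounded degree. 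This requires controlling the logarithmic singularity of $\log$ integrated against the spectral measure of the Laplacian near $0$ --- equivalently, ruling out a nontrivial limiting contribution from small Laplacian eigenvalues --- which is the technical heart of the matter.
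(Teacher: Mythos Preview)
The paper does not give its own proof of this proposition: it is stated there merely as a special case of \cite[Corollary~3.8]{Lyons}, with no argument supplied. Your sketch is therefore not competing with any proof in the paper; rather, you are outlining the mechanism behind Lyons' result itself. That outline is accurate: the counting argument showing that the ball-distributions $U_{G_n}^r$ and $U_{G_n'}^r$ agree up to $o(1)$ in total variation is correct (bounded degree turns $o(|G_n|)$ bad vertices into $o(|G_n|)$ non-$r$-good vertices for each fixed $r$), and the compactness/subsequence reduction to Lyons' continuity of tree entropy under Benjamini--Schramm convergence is the right way to finish. You also correctly identify that the real content --- continuity of $\mu\mapsto h(\mu)$, i.e.\ control of the logarithmic singularity of the Laplacian spectral measure near $0$ --- lives in Lyons' paper and is not something one proves in passing.
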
 

\begin{proof}[Proof of Theorem~\ref{thm:alt} part (b).]
As in the proof of part (a), let $K^n$ denote the $n$--periodic link formed by an $n$--cycle of tangles $T$.
Let $L^n=K\#\cdots\# K$ denote the connect sum of $n$ copies of $K$, which has a reduced alternating diagram as the closure of $n$ copies of $T$ joined in a row.
Note that $c(K^n)=c(L^n)=n\cdot c(K)$, and $\det(L^n)=(\det(L))^n$.

In terms of Tait graphs, $G(K^n)$ is obtained from $G(L^n)$ by identifying one pair of vertices, so that $G(L^n)$ is a subgraph of $G(K^{n+1})$, and $|G(L^n)|=|G(K^n)|+1$.  Hence, by Proposition~\ref{prop:lyons},
\[
\lim_{n\to\infty}\frac{\log\tau(G(K^n))}{|G(K^n)|} = \lim_{n\to\infty}\frac{\log\tau(G(L^n))}{|G(L^n)|}.
\]
Therefore,
\[
\lim_{n\to\infty}\frac{2\pi\log\det(K^n)}{c(K^n)} =
\lim_{n\to\infty}\frac{2\pi\log\det(L^n)}{c(L^n)} =
\lim_{n\to\infty}\frac{n\cdot 2\pi\log\det(K)}{n\cdot c(K)} =
\frac{2\pi\log\det(K)}{c(K)}.
\]
This completes the proof of part (b).
\end{proof}

Note that part (a) of Theorem~\ref{thm:alt} generalizes \cite[Corollary~3.7]{ckp:weaving}, where $B$ was the braid axis.


\section{Quantum density spectra}\label{sec:quantum}

\subsection{Jones polynomial density spectrum}
Let $V_K(t)=\sum_i a_i t^i$ denote the Jones polynomial, with $d={\rm{span}}(V_K(t))$, which is the difference between the highest and lowest degrees of terms in $V_K(t)$.  Let $\mu(K)$ denote the average of the absolute values of coefficients of $V_K(t)$, i.e.\
\[ \mu(K)= \frac{1}{d+1}\sum |a_i|. \]

For sequences of alternating diagrammatically maximal knots, we have:

\begin{prop}\label{prop:jones}
If $K_n$ is any sequence of alternating diagrammatically maximal links,
\[ \lim_{n\to\infty}\frac{2\pi\log\mu(K_n)}{c(K_n)}=\voct. \]
\end{prop}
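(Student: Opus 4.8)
The plan is to reduce Proposition~\ref{prop:jones} to the diagrammatic maximality hypothesis by identifying $\sum_i|a_i|$ with the determinant $\det(K_n)$, and then quoting Theorem~\ref{thm:diagmax}'s hypothesis directly. The key classical input is that for a reduced alternating diagram the Jones polynomial is \emph{alternating}: its nonzero coefficients $a_i$ alternate in sign (Kauffman--Murasugi--Thistlethwaite). Since $(-1)^i$ also alternates, the products $a_i(-1)^i$ all share a common sign, so there is no cancellation in evaluating $V_K$ at $t=-1$, and
\[
\det(K) = |V_K(-1)| = \sum_i|a_i|.
\]
First I would record this identity, which gives $\mu(K_n) = \det(K_n)/(d_n+1)$, where $d_n = \mathrm{span}(V_{K_n})$.

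Taking logarithms then splits the density into a main term and a correction:
\[
\frac{2\pi\log\mu(K_n)}{c(K_n)} = \frac{2\pi\log\det(K_n)}{c(K_n)} - \frac{2\pi\log(d_n+1)}{c(K_n)}.
\]
The first term converges to $\voct$ precisely because $K_n$ is assumed diagrammatically maximal (and, in particular, $c(K_n)\to\infty$). For the correction term I would invoke the Kauffman--Murasugi span inequality, $\mathrm{span}(V)\le c$ for any connected diagram with $c$ crossings; applied to a minimal diagram this gives $d_n \le c(K_n)$, whence
\[
0 \le \frac{2\pi\log(d_n+1)}{c(K_n)} \le \frac{2\pi\log\bigl(c(K_n)+1\bigr)}{c(K_n)} \longrightarrow 0.
\]
Combining the two displays yields $\displaystyle\lim_{n\to\infty}\tfrac{2\pi\log\mu(K_n)}{c(K_n)} = \voct$.

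The main obstacle is the first step: controlling $\sum_i|a_i|$ rather than merely the evaluation $V_K(-1)$. A priori the triangle inequality only gives $\sum_i|a_i| \ge |V_K(-1)| = \det(K)$, with possible strict inequality, so the substance is that alternation of signs forces equality and thereby pins $\mu(K_n)$ to the determinant. Once this is established, the remainder is elementary asymptotics, with the span bound guaranteeing that the $\log(d_n+1)$ normalization is of lower order than $c(K_n)$ and hence contributes nothing to the density.
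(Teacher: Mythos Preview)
Your proposal is correct and follows essentially the same approach as the paper: both establish $\mu(K)=\det(K)/(d+1)$ for alternating $K$ and then observe that the $\log(d+1)/c(K)$ correction vanishes in the limit. The only cosmetic difference is that the paper invokes the spanning tree expansion to write $\mu(K)=\det(K)/(c(K)+1)$ directly (using that $\mathrm{span}(V_K)=c(K)$ exactly for reduced alternating diagrams), whereas you keep $d_n$ and bound it via $d_n\le c(K_n)$; either version suffices.
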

\begin{proof}
If, as above, $G$ is the Tait graph of $K$, and $\tau(G)$ is the number of spanning trees, then $\tau(G)=\det(K)$ and $e(G)=c(K)$.
It follows from the spanning tree expansion for $V_K(t)$ in
\cite{thistlethwaite} that if $K$ is an alternating link,
$$ \mu(K) = \frac{\det(K)}{c(K)+1}. $$
Thus, 
$\displaystyle \frac{\log\mu(K)}{c(K)} = \frac{\log\det(K) - \log(c(K)+1)}{c(K)}$, and the result follows since $K_n$ are diagrammatically maximal links.
\end{proof}

We conjecture that the alternating condition in Proposition \ref{prop:jones} can be dropped.

\begin{conjecture}\label{conj:jonesmax}
If $K$ is any knot or link,
\[ \frac{2\pi\log\mu(K)}{c(K)} \leq \voct.\]
\end{conjecture}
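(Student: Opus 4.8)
The plan is to reduce Conjecture~\ref{conj:jonesmax} to Conjecture~\ref{conj:diagmax} (equivalently, to Kenyon's conjecture for planar graphs) by bounding the coefficient sum $\sum_i|a_i|$ directly by a spanning tree count. For an alternating link the two conjectures are already essentially the same: the proof of Proposition~\ref{prop:jones} gives $\mu(K)=\det(K)/(c(K)+1)$, so $\log\mu(K)\le\log\det(K)$ and the bound is immediate from Conjecture~\ref{conj:diagmax}. The difficulty is entirely in the non-alternating case, where the clean identity $\mu(K)=\det(K)/(c(K)+1)$ fails and $\sum_i|a_i|$ must be controlled by other means, since it is an $L^1$-type quantity that no single evaluation of $V_K$ detects.

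First I would fix a diagram $D$ of $K$ realizing the crossing number, so that its Tait graph $G=G(D)$ is a connected planar graph with $e(G)=c(K)$, now possibly signed since $D$ need not be alternating. The key input is Thistlethwaite's spanning tree expansion of the Kauffman bracket \cite{thistlethwaite}: each spanning tree of $G$ contributes a single monomial $\pm A^{k}$, so that $\langle D\rangle$, and hence $V_K(t)$ after the monomial change of variable and writhe correction, is a signed sum of exactly $\tau(G)$ monomials. Collecting like terms and applying the triangle inequality yields the unconditional bound $\sum_i|a_i|\le\tau(G)$. Since $d\ge 0$, we then obtain $\mu(K)=\frac{1}{d+1}\sum_i|a_i|\le\tau(G)$; note that in the alternating case no two trees share an exponent, so $\sum_i|a_i|=\tau(G)=\det(K)$, recovering the identity above and confirming the bound is sharp there.

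It then remains only to bound $\tau(G)$. Because $G$ is planar with $c(K)$ edges, and every connected planar graph arises as the Tait graph of an alternating link, the planar case of Conjecture~\ref{conj:diagmax} --- which is exactly Kenyon's spanning tree entropy bound \cite{kenyon} --- gives
\[
\frac{2\pi\log\mu(K)}{c(K)} \ \le\ \frac{2\pi\log\tau(G)}{e(G)} \ \le\ \voct,
\]
which is the desired inequality.

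The main obstacle is that this argument is conditional: it derives Conjecture~\ref{conj:jonesmax} from Conjecture~\ref{conj:diagmax}, which itself remains open (verified only through $16$ crossings). The genuinely new and unconditional step is the spanning tree bound $\sum_i|a_i|\le\tau(G)$, and I expect the real work to lie in any unconditional route: either proving Kenyon's bound $2\pi\log\tau(G)/e(G)\le\voct$ for all planar graphs, or else bounding the spanning tree entropy of $G(D)$ below $\voct$-density directly without passing through the determinant. Secondary points to check are the normalization of $V_K$ for multi-component links (the half-integer powers of $t$ do not affect the coefficient sum) and that the spanning tree expansion applies verbatim to non-prime or reducible diagrams; both of these should be routine.
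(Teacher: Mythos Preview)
Your proposal is correct and matches the paper exactly: the paper does not prove this conjecture unconditionally, but instead proves (Proposition~\ref{prop:equivalent}) that Conjecture~\ref{conj:diagmax} implies Conjecture~\ref{conj:jonesmax}, using precisely your spanning tree bound $\sum_i|a_i|\le\tau(G(K))$ from Thistlethwaite's expansion and then comparing to the alternating link with the same projection (equivalently, invoking Kenyon's bound on the planar Tait graph). You have correctly identified both the argument and its conditional nature.
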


\begin{prop}\label{prop:equivalent}
Conjecture~\ref{conj:diagmax} implies Conjecture~\ref{conj:jonesmax}.
\end{prop}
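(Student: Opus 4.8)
The plan is to dominate $\mu(K)$ by the determinant of an auxiliary \emph{alternating} link having at most $c(K)$ crossings, and then to apply the assumed Conjecture~\ref{conj:diagmax} to that link. First I would discard the normalizing factor: since $d+1\geq 1$, we have $\mu(K)=\frac{1}{d+1}\sum_i|a_i|\leq \sum_i|a_i|$, so it suffices to prove $2\pi\log\big(\sum_i|a_i|\big)\leq \voct\, c(K)$.

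Next I would bound the coefficient sum $\sum_i|a_i|$ by a spanning-tree count. Fix a reduced, connected diagram $D$ realizing the crossing number, and let $G$ be a checkerboard (Tait) graph of $D$, so that $e(G)=c(K)$. By Thistlethwaite's spanning-tree expansion of the Kauffman bracket, $V_K(t)$ is a sum of exactly $\tau(G)$ signed monomials, one for each spanning tree of $G$, each with coefficient $\pm 1$; passing from the bracket variable to $t$ merely relabels exponents and multiplies by a single monomial, so it leaves all coefficient absolute values unchanged. Collecting terms and applying the triangle inequality then gives $\sum_i|a_i|\leq \tau(G)$.

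Finally I would reinterpret $\tau(G)$ as a determinant. The alternating link $A(D)$ with the same projection as $D$ has Tait graph $G$, so $\det(A(D))=\tau(G)$, while $c(A(D))\leq e(G)=c(K)$. Applying Conjecture~\ref{conj:diagmax} to the link $A(D)$ yields $2\pi\log\tau(G)=2\pi\log\det(A(D))\leq \voct\, c(A(D))\leq \voct\, c(K)$. Chaining the three inequalities,
\[ 2\pi\log\mu(K)\;\leq\; 2\pi\log\Big(\sum_i|a_i|\Big)\;\leq\; 2\pi\log\tau(G)\;\leq\; \voct\, c(K), \]
and dividing by $c(K)$ gives the claim.

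I expect the spanning-tree bound $\sum_i|a_i|\leq\tau(G)$ to be the main obstacle: it is where all the content lies, and it requires the unit-coefficient form of Thistlethwaite's expansion together with the observation that collecting monomials (and the bracket-to-Jones substitution) can only shrink the coefficient sum. A secondary technical point is ensuring that $D$ may be taken reduced and connected, so that $\det(A(D))=\tau(G)$ holds on the nose and $c(A(D))\leq c(K)$; the split or trivial cases, where $\det=0$ or the bound is immediate, should be dispatched separately.
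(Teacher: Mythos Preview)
Your argument is correct and is essentially the paper's own proof: both bound $\sum_i|a_i|$ by $\tau(G)$ via Thistlethwaite's spanning-tree expansion, identify $\tau(G)$ with $\det$ of the alternating link on the same projection, and then invoke Conjecture~\ref{conj:diagmax}. The only cosmetic difference is that the paper splits into the alternating case (handled by the formula $\mu(K)=\det(K)/(c(K)+1)$ from Proposition~\ref{prop:jones}) and the non-alternating case, whereas you treat both at once by discarding the factor $1/(d+1)$ up front; your version is a bit more streamlined.
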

\begin{proof}
By the proof of Proposition \ref{prop:jones},
Conjecture~\ref{conj:diagmax} would immediately imply that
Conjecture~\ref{conj:jonesmax} holds for all alternating links $K$. By
the spanning tree expansion for $V_K(t)$, $\Sigma |a_i| \leq
\tau(G(K))$, with equality if and only if $K$ is alternating. Hence, if
$K$ is not alternating, then there exists an alternating link with the
same crossing number and strictly greater coefficient sum $\Sigma
|a_i|$. Therefore, Conjecture~\ref{conj:diagmax} would still imply
Conjecture~\ref{conj:jonesmax} in the non-alternating case.
\end{proof}

\begin{define}\label{def:specjones}
  Let $\mathcal{C}_{\rm JP}= \{2\pi\log\mu(K)/c(K)\}$ be the set of
  Jones polynomial densities for all links $K$.  We define
  $\specjones= \mathcal{C}'_{\rm JP}$ as its derived set (set of
  all limit points).
\end{define}

Conjecture \ref{conj:jonesmax} is that $\displaystyle \specjones \subset [0,\voct]$.  By the results above, we have
\begin{corollary}\label{cor:qv8}
$\{0, \voct\} \subset \specvol \cap \specdet \cap \specjones$.
\end{corollary}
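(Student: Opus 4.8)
The plan is to leverage Corollary~\ref{cor:v8}, which already provides $\{0,\voct\}\subset\specvol\cap\specdet$, so that the only remaining task is the inclusion $\{0,\voct\}\subset\specjones$. The key observation is that for alternating links the Jones polynomial density and the determinant density are asymptotically equal. Indeed, the proof of Proposition~\ref{prop:jones} establishes the identity
\[
\frac{2\pi\log\mu(K)}{c(K)} = \frac{2\pi\log\det(K)}{c(K)} - \frac{2\pi\log(c(K)+1)}{c(K)}
\]
for any alternating link $K$. Since the correction term $2\pi\log(c(K)+1)/c(K)\to 0$ as $c(K)\to\infty$, any limit of determinant densities realized along an alternating sequence with $c(K_n)\to\infty$ is automatically a limit of Jones densities. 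Thus the entire argument reduces to exhibiting alternating sequences realizing $0$ and $\voct$ as determinant densities.

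For $\voct$, I would apply Proposition~\ref{prop:jones} directly to a sequence of alternating diagrammatically maximal links. Such sequences are already in hand: the weaving knots are alternating and both geometrically and diagrammatically maximal by \cite{ckp:weaving, ckp:gmax}. Proposition~\ref{prop:jones} then yields $\lim_{n\to\infty} 2\pi\log\mu(K_n)/c(K_n)=\voct$, so $\voct\in\specjones$.

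For $0$, I would use the sequence obtained by twisting on two strands of a fixed alternating link, the same construction used in Section~1 to place $0$ in $\specvol\cap\specdet$. This sequence remains alternating and has $c(K_n)\to\infty$, and its determinant density tends to $0$. Applying the displayed identity together with $2\pi\log(c(K_n)+1)/c(K_n)\to 0$ shows the Jones density also tends to $0$, so $0\in\specjones$. Combining these two inclusions with Corollary~\ref{cor:v8} gives $\{0,\voct\}\subset\specvol\cap\specdet\cap\specjones$.

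I expect the only genuine subtlety to be bookkeeping rather than mathematical difficulty: one must confirm that the sequences realizing $0$ and $\voct$ inside $\specvol\cap\specdet$ can be chosen to be \emph{alternating}, since the determinant-to-Jones transfer rests on the alternating identity above. This holds for the twisting family (realizing $0$) and the weaving-knot family (realizing $\voct$), so no additional construction is required and the corollary follows immediately.
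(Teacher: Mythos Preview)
Your proposal is correct and follows essentially the same approach as the paper. The paper's proof is terser but identical in substance: twisting on two strands of an alternating link yields $0$ as a common limit point in all three spectra, and a single sequence of alternating links satisfying Theorem~\ref{thm:geommax} (hence also Theorem~\ref{thm:diagmax} and Proposition~\ref{prop:jones}) simultaneously witnesses $\voct$ in all three, via the same alternating identity $\mu(K)=\det(K)/(c(K)+1)$ that you invoke.
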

\begin{proof}
Twisting on two strands of an alternating link gives $0$ as a common limit
point.  For links $K_n$ that satisfy Theorem~\ref{thm:geommax}, their asymptotic volume density equals their asymptotic determinant density, so in this case,
\[ \lim_{n\to\infty}\frac{\vol(K_n)}{c(K_n)} \: = \: \lim_{n\to\infty}\frac{2\pi\log\det(K_n)}{c(K_n)} \: = \: \lim_{n\to\infty}\frac{2\pi\log\mu(K_n)}{c(K_n)} \: = \: \voct. \]
\end{proof}


\subsection{Knot homology density spectrum}
For alternating knots, the ranks of their reduced Khovanov homology
and their knot Floer homology both equal the determinant.
Every non-alternating link can be viewed as a modification of a
diagram of an alternating link with the same projection, by changing
crossings. In \cite{ckp:gmax}, we proved that this modification affects the determinant as follows.
Let $K$ be a reduced alternating link diagram, and $K'$ be obtained by changing any proper subset of crossings of $K$.  Then
\[ \det(K') < \det(K). \]
Motivated by this fact, the first two authors previously conjectured
that alternating diagrams similarly maximize hyperbolic volume in a
given projection.  With $K$ and $K'$ as before, they have verified
that for all alternating knots up to 18 crossings ($\approx 10.7$
million knots),
\[ \vol(K') < \vol(K). \]
We conjecture in \cite{ckp:gmax} that the same result holds if $K'$ is
obtained by changing any proper subset of crossings of $K$.  Note that
by Thurston's Dehn surgery theorem, the volume converges from below
when twisting two strands of a knot, so $\vol(K) - \vol(K')$ can be an
arbitrarily small positive number.

We have verified the following conjecture for all alternating knots up to 16 crossings, and weaving knots and links for $3\leq p\leq 50$ and $2\leq q\leq 50$.
\begin{conjecture}[\cite{ckp:gmax}]\label{conj:detvol}
For any alternating hyperbolic link $K$, 
\[ \vol(K) < 2\pi\log\det(K). \]
\end{conjecture}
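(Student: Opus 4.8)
The plan is to reformulate the inequality as a comparison of two nonnegative \emph{defects} measured against the common scale $\voct\,c(K)$. Writing the claim as
\[ \voct\, c(K) - \vol(K) \;>\; \voct\, c(K) - 2\pi\log\det(K), \]
the left side is nonnegative by \eqref{eq:dylan} and the right side is nonnegative by Conjecture~\ref{conj:diagmax}, so the content is that the \emph{geometric defect} of a reduced alternating diagram strictly dominates its \emph{combinatorial defect}. Both defects are boundary phenomena that vanish, after dividing by $c(K)$, exactly on the infinite weave $\W$, where volume and determinant densities each equal $\voct$. The conjecture then says that the geometric boundary loss always strictly exceeds the combinatorial one.

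First I would dispose of the regime of heavy twisting, which is the easy and suggestive case. If a twist region carries $k$ crossings, then as $k\to\infty$ the volume stays bounded (it converges to the volume of the associated augmented link, by Thurston's Dehn surgery theorem and the estimates of \cite{fkp:volume} used in Theorem~\ref{thm:alt}(a)), whereas $\det(K)$ grows linearly in $k$, so $2\pi\log\det(K)$ grows without bound. Thus the inequality is comfortable away from maximal diagrams, and the difficulty concentrates near weave-like diagrams, where both densities approach $\voct$ simultaneously.

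For the hard regime I would seek matching diagram-dependent bounds: a refined upper bound for $\vol(K)$ and a lower bound for $2\pi\log\det(K)$. On the determinant side, $\det(K)=\tau(G(K))$ is the spanning-tree number of the Tait graph, and its per-edge tree entropy is controlled by Proposition~\ref{prop:lyons} together with the exact value of $\voct$ as the square-grid entropy; for diagrams whose Tait graphs exhaust $G(\W)$ along a F{\o}lner sequence this should give $2\pi\log\det(K)\ge \voct\,c(K) - D_{\det}$, where $D_{\det}$ is governed by the boundary $|\partial G_n|$ of Definition~\ref{def:folner}. On the volume side I would refine Thurston's octahedral decomposition: octahedra placed at crossings near the boundary of the diagram are forced to be non-regular and lose a definite amount of volume below $\voct$, yielding $\vol(K)\le \voct\,c(K) - D_{\vol}$ with $D_{\vol}$ again a boundary quantity (the strict bound of \cite{AdamsBound} made quantitative). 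The goal is then the pointwise comparison $D_{\vol} > D_{\det}$. The building-block structure of Theorem~\ref{thm:alt} — cycles of tangles and connect sums, with $\det$ multiplicative and $\vol$ additive under connect sum — should let one reduce the general estimate to a finite family of tangles and then propagate it.

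The main obstacle is the volume upper bound in the near-maximal regime. The combinatorial defect $D_{\det}$ is tractable because spanning-tree asymptotics are local and sharp, but there is no equally sharp, diagram-local \emph{upper} bound for hyperbolic volume: the octahedral bound \eqref{eq:dylan} is tight only in the limit, and quantifying uniformly how much volume the boundary octahedra lose is delicate. A more promising but more demanding route is to control the cusped geometry directly through the augmented links $K\cup B$ of Theorem~\ref{thm:alt}(a), comparing their decomposition into ideal polyhedra with the spanning-tree entropy of the same Tait graph. The crux is to establish that the geometric boundary correction strictly exceeds the combinatorial one with a \emph{uniform} gap; it is precisely this uniform strict comparison, rather than either bound in isolation, that keeps the statement a conjecture.
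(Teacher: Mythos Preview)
The statement you are attempting to prove is Conjecture~\ref{conj:detvol}, and the paper offers \emph{no proof} of it: it is stated as an open conjecture supported only by numerical verification up to 16 crossings and for certain weaving knots. There is therefore no argument in the paper to compare your proposal against.

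Your proposal is not a proof either, and you seem to recognize this in your final sentence. What you have written is a heuristic outline of a possible strategy together with an honest assessment of why it stalls. A few remarks on the content: your reformulation as a comparison of defects is algebraically trivial but already leans on Conjecture~\ref{conj:diagmax} to assert that the combinatorial defect is nonnegative, so you are assuming one open conjecture in the course of attacking another. The heavy-twisting regime is indeed routine for the reason you give. The substantive part --- a pointwise inequality $D_{\vol} > D_{\det}$ between a sharpened octahedral upper bound and a spanning-tree lower bound --- is speculative: no such diagram-local refinement of the octahedral bound with a uniform boundary gap is currently available, and this is exactly the obstruction you identify. So the proposal correctly locates the difficulty but does not overcome it; it is a research program, not a proof, and the paper makes no stronger claim.
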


Conjectures \ref{conj:diagmax} and \ref{conj:detvol} would imply that any geometrically maximal sequence of knots is diagrammatically maximal.
In contrast, we can obtain $K_n$ by twisting on two strands, such that $\vol(K_n)$ is bounded but $\det(K_n)\to\infty$.
We also showed in \cite{ckp:gmax} that the inequality in Conjecture \ref{conj:detvol} is sharp, in the sense that
if $\alpha<2\pi$, then there exist alternating hyperbolic knots $K$ such that $\alpha\log\det(K)<\vol(K)$.

A natural extension of Conjecture~\ref{conj:detvol} to any hyperbolic
knot is to replace the determinant with the rank of the reduced
Khovanov homology $\widetilde{H}^{*,*}(K)$.  
We have verified the following conjecture for all
non-alternating knots with up to 15 crossings. 
\begin{conjecture}[\cite{ckp:gmax}]\label{conj:KHvol}
For any hyperbolic knot $K$, 
\[ \vol(K) < 2\pi\log\text{\rm rank}(\widetilde{H}^{*,*}(K)). \]
\end{conjecture}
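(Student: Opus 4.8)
The plan is to separate the alternating and non-alternating cases, reducing the former to Conjecture~\ref{conj:detvol} and isolating the genuine difficulty in the latter. For an alternating knot $K$ the reduced Khovanov homology is thin, so $\text{rank}(\widetilde{H}^{*,*}(K))=\det(K)$ and the desired inequality is exactly Conjecture~\ref{conj:detvol}, already verified in the stated ranges; thus the entire content of the present statement lies in the non-alternating case. For any knot the starting point is the elementary observation that $\det(K)$ equals, up to sign, an alternating sum of the ranks of the groups $\widetilde{H}^{i,j}(K)$ (the graded Euler characteristic specialized to the determinant), so that the triangle inequality gives
\[ \text{rank}\bigl(\widetilde{H}^{*,*}(K)\bigr) \;\ge\; \det(K), \]
with equality precisely when $K$ is Khovanov-thin.

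To exploit this, I would use the crossing-change framework recalled above. Given a non-alternating hyperbolic knot $K$, realize it as a diagram obtained from a reduced alternating diagram $K_0$ on the same projection by changing a proper subset of crossings, so that $\det(K)<\det(K_0)$ and, by the conjectured volume monotonicity under crossing changes (verified for alternating knots up to $18$ crossings), $\vol(K)<\vol(K_0)$. Assuming $K_0$ is hyperbolic---the non-hyperbolic alternating parents are $(2,n)$ torus links and can be handled directly---Conjecture~\ref{conj:detvol} gives $\vol(K_0)<2\pi\log\det(K_0)$. Chaining these inequalities yields $\vol(K)<2\pi\log\det(K_0)$, so it would suffice to prove the purely homological lower bound $\text{rank}(\widetilde{H}^{*,*}(K))\ge\det(K_0)$: the reduced Khovanov rank of the non-alternating knot should be at least the determinant of its alternating parent.

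The hard part is precisely this last inequality. The Euler-characteristic bound above is too weak here, since it only yields $\text{rank}(\widetilde{H}^{*,*}(K))\ge\det(K)$, and $\det(K)$ can be far smaller than $\det(K_0)$---indeed as small as $1$. One therefore has to show that the rank does not collapse all the way down under the crossing changes, i.e.\ that the off-diagonal homology created by passing from the thin parent $K_0$ to the non-thin knot $K$ more than compensates for the drop in determinant. The natural tools are the refinements that measure non-thinness---the Lee and Bar-Natan spectral sequences and the Turaev-surface ribbon graph of $K_0$---whose extra generators are exactly the homology invisible to the determinant. The central obstacle is that no lower bound on the rank of Khovanov homology in terms of hyperbolic volume is currently known, and crossing changes can cancel homology unpredictably in these spectral sequences; controlling that cancellation well enough to extract a volume-sensitive lower bound is the open heart of the problem, which is why the statement remains conjectural.
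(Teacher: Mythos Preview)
The statement is a \emph{conjecture}, and the paper offers no proof; it only reports computational verification for non-alternating knots up to $15$ crossings and notes that Conjecture~\ref{conj:detvol} is the alternating special case. So there is no proof in the paper to compare against, and your write-up is, appropriately, a strategy sketch rather than a proof.

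That said, your proposed route has a concrete obstruction beyond mere difficulty. You reduce the non-alternating case to the inequality $\text{rank}(\widetilde{H}^{*,*}(K)) \ge \det(K_0)$, where $K_0$ is the alternating parent. But the known bound goes the \emph{other} way: the spanning-tree model for reduced Khovanov homology (cited in the paper as \cite{KH}, and used in the proof of Proposition~\ref{prop:KH}) gives
\[
\det(K) \;\le\; \text{rank}\bigl(\widetilde{H}^{*,*}(K)\bigr) \;\le\; \det(K_0),
\]
since the chain complex is generated by spanning trees of the Tait graph of $K_0$. So the inequality you isolate as ``the hard part'' is not merely unproven---it is false whenever $K$ is not Khovanov-thin with $\det(K)=\det(K_0)$. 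Your chain $\vol(K)<\vol(K_0)<2\pi\log\det(K_0)$ therefore overshoots: $2\pi\log\det(K_0)$ is an \emph{upper} bound for $2\pi\log\text{rank}(\widetilde{H}^{*,*}(K))$, not a lower one, and the argument cannot close. Any viable approach must compare $\vol(K)$ directly to $\text{rank}(\widetilde{H}^{*,*}(K))$ without passing through the alternating parent's determinant from above.
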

\noindent
Note that Conjecture~\ref{conj:detvol} is a special case of Conjecture~\ref{conj:KHvol}.

\begin{question} 
Is Conjecture~\ref{conj:KHvol} true for knot Floer homology; i.e., is it true that\\ $\displaystyle \vol(K) < 2\pi\log\text{\rm rank}(HFK(K))\,?$
\end{question}

\begin{define}\label{def:specKH}
Let $\displaystyle \mathcal{C}_{\rm KH}=\{2\pi\log\text{\rm rank}(\widetilde{H}^{*,*}(K))/c(K)\}$ be the set of Khovanov homology densities for
all links $K$.  We define $\specKH= \mathcal{C}'_{\rm KH}$ as its derived set
(set of all limit points).
\end{define}

\begin{prop}\label{prop:KH} If $\specdet \subset [0,\voct]$ then $\specKH \subset [0,\voct]$.
\end{prop}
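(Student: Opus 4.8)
The plan is to reduce everything to the determinant by way of a spanning tree model for Khovanov homology, and then to invoke Theorem~\ref{thm:alt}(b) to control the resulting determinant densities. Fix a link $K$ and a minimal (hence reduced) diagram $D$, with checkerboard Tait graph $G$, so that $e(G)=c(K)$. The spanning tree model for reduced Khovanov homology (due to Wehrli and to the first two authors) realizes $\widetilde{H}^{*,*}(K)$ as the homology of a complex with exactly one generator for each spanning tree of $G$, whence
\[
\mathrm{rank}\,\widetilde{H}^{*,*}(K)\;\le\;\tau(G).
\]
Letting $K_A$ denote the alternating link with Tait graph $G$, the diagram is reduced, so $c(K_A)=e(G)=c(K)$, and $\det(K_A)=\tau(G)$ by Thistlethwaite. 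Thus every link $K$ admits an alternating link $K_A$ with $c(K_A)=c(K)$ and
\[
\frac{2\pi\log\mathrm{rank}\,\widetilde{H}^{*,*}(K)}{c(K)}\;\le\;\frac{2\pi\log\det(K_A)}{c(K_A)}.
\]

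Next I would bound the right-hand side using the hypothesis. The crucial observation is that Theorem~\ref{thm:alt}(b) upgrades the assumption on limit points into a pointwise bound for hyperbolic alternating links: if $K_A$ is hyperbolic, then $2\pi\log\det(K_A)/c(K_A)\in\specdet\subset[0,\voct]$, so this determinant density is at most $\voct$. Combined with the displayed inequality, this gives $2\pi\log\mathrm{rank}\,\widetilde{H}^{*,*}(K)/c(K)\le\voct$ whenever $K_A$ is hyperbolic. Since $\mathrm{rank}\,\widetilde{H}^{*,*}(K)\ge 1$, the densities are also nonnegative. Because this already bounds the densities themselves rather than merely their limit points, it will follow that $\mathcal{C}_{\rm KH}\subset[0,\voct]$, and hence $\specKH\subset[0,\voct]$, once the non-hyperbolic diagrams are dealt with.

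The main obstacle is precisely that $\specdet$ is defined using only hyperbolic links, so I must separately handle those $K$ for which the associated alternating link $K_A$ is not hyperbolic. By Menasco's theorem a prime alternating link is non-hyperbolic only when it is a $(2,q)$--torus link, and a direct computation gives $2\pi\log q/q<\voct$ for every $q$; the remaining non-hyperbolic alternating links are connected sums. For a connected sum the determinant is multiplicative and the crossing number additive, so its determinant density is the crossing-number--weighted average of those of its prime summands, each of which is either hyperbolic (density $\le\voct$ by the argument above) or a $(2,q)$--torus link (density $<\voct$). Hence $2\pi\log\det(K_A)/c(K_A)\le\voct$ for every alternating $K_A$, and therefore the bound on $\mathcal{C}_{\rm KH}$ holds with no hyperbolicity restriction on $K$. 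I expect the care in this last step — reducing a non-hyperbolic alternating link to its prime summands and verifying the weighted-average inequality — to be the only delicate point of the argument.
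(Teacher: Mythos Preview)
Your argument is correct and follows essentially the same route as the paper: both bound $\mathrm{rank}\,\widetilde{H}^{*,*}(K)$ above by $\det(K_A)=\tau(G)$ for the alternating link $K_A$ sharing the projection of a minimal diagram of $K$ (the paper phrases this as $\det(K')\le \mathrm{rank}\,\widetilde{H}^{*,*}(K')\le\det(K)$, citing Khovanov-homology results rather than the spanning tree complex). Your version is in fact more complete than the paper's terse proof, since you explicitly invoke Theorem~\ref{thm:alt}(b) to turn the hypothesis on $\specdet$ into a pointwise bound, and you dispose of the non-hyperbolic alternating $K_A$ (torus links and connected sums) separately---steps the paper leaves implicit.
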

\begin{proof}
For alternating knots, $\text{rank}(\widetilde{H}^{*,*}(K))=\det(K)$.
Let $K$ be an alternating hyperbolic knot, and $K'$ be obtained by changing any proper subset of crossing of $K$.
It follows from results in \cite{KH} that 
$\displaystyle \det(K') \leq \text{rank}(\widetilde{H}^{*,*}(K'))\leq \det(K)$.
\end{proof}

\begin{question} 
Does $\specKH=\specdet$?
\end{question}


\subsection{Kashaev invariant density spectrum}

The Volume Conjecture (see, e.g.\ \cite{InteractionsBook} and
references therein) is an important mathematical program to bridge the
gap between quantum and geometric topology.  One interesting
consequence of our discussion above is a {\em maximal volume
  conjecture} for a sequence of links that is geometrically and
diagrammatically maximal.

The Volume Conjecture involves the Kashaev invariant
\[ \langle K \rangle_N := \frac{J_N(K; \exp(2\pi i/N))}{J_N(\bigcirc; \exp(2\pi i/N))},\]
and is the following limit:
\[ \lim_{N\to\infty} 2\pi\log|\langle K \rangle_N|^{\frac{1}{N}} = \vol(K). \]

For any knot $K$, Garoufalidis and Le \cite{GarLe} proved 
\[ \limsup_{N\to\infty}\frac{2\pi\log|\langle K\rangle_N|^{\frac{1}{N}}}{c(K)} \leq \voct \]

Now, since the limits in Theorems \ref{thm:geommax} and \ref{thm:diagmax} are both equal to $\voct$, we can make the maximal volume conjecture as follows.
\begin{conjecture}[Maximal volume conjecture]\label{asympvc}
For any sequence of links $K_n$ that is both geometrically and diagrammatically maximal, there exists an increasing integer-valued function $N=N(n)$ such that
\[ \lim_{n\to\infty}\frac{2\pi\log|\langle K_n\rangle_N|^{\frac{1}{N}}}{c(K_n)} \: = \: \voct \: = \: \lim_{n\to\infty} \frac{\vol(K_n)}{c(K_n)}. \]
\end{conjecture}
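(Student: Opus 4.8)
The plan is to reduce Conjecture~\ref{asympvc} to the Volume Conjecture applied to each individual link $K_n$, via a diagonalization in the two parameters $n$ and $N$. Fix a sequence $K_n$ that is both geometrically and diagrammatically maximal, so in particular $c(K_n)\to\infty$ and, by geometric maximality, $\vol(K_n)/c(K_n)\to\voct$. Suppose the Volume Conjecture holds for each $K_n$, i.e.\ that for every fixed $n$,
\[ \lim_{N\to\infty} 2\pi\log|\langle K_n\rangle_N|^{\frac{1}{N}} = \vol(K_n). \]
Then for each $n$ we may select an integer $N_0(n)$ so large that $\bigl|\,2\pi\log|\langle K_n\rangle_N|^{1/N} - \vol(K_n)\,\bigr| < 1$ for all $N\geq N_0(n)$, and we set $N(n) = \max\{N(n-1)+1,\,N_0(n)\}$ so that $N(n)$ is strictly increasing and integer-valued.

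With this choice, write $2\pi\log|\langle K_n\rangle_{N(n)}|^{1/N(n)} = \vol(K_n) + \delta_n$ with $|\delta_n|<1$. Dividing by $c(K_n)$ gives
\[ \frac{2\pi\log|\langle K_n\rangle_{N(n)}|^{\frac{1}{N(n)}}}{c(K_n)} = \frac{\vol(K_n)}{c(K_n)} + \frac{\delta_n}{c(K_n)}. \]
Since $c(K_n)\to\infty$ and $|\delta_n|<1$, the error term $\delta_n/c(K_n)$ tends to $0$, so the left-hand side shares the limit of $\vol(K_n)/c(K_n)$, namely $\voct$ by geometric maximality; this is exactly the assertion of the conjecture. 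The feature that makes this argument work is that we need only \emph{pointwise} convergence in the Volume Conjecture for each individual $K_n$: no uniform control over the rate of convergence across the family is required, since dividing by $c(K_n)\to\infty$ absorbs the bounded error $\delta_n$.

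The reason this strategy cannot sidestep the Volume Conjecture is that the determinant alone is insufficient to reach the target. Taking $N=2$, so $q=\exp(\pi i)=-1$, the Kashaev invariant reduces up to normalization to $|V_{K_n}(-1)|=\det(K_n)$, whence $2\pi\log|\langle K_n\rangle_2|^{1/2}/c(K_n) = \pi\log\det(K_n)/c(K_n) + o(1) \to \voct/2$ by diagrammatic maximality. Thus any bounded or slowly growing choice of $N$ yields at most half of $\voct$, and genuine large-$N$ (geometric) behavior is needed to recover the full value. This is precisely where the main obstacle lies: the lower bound in the Volume Conjecture---that the growth rate $\frac{2\pi}{N}\log|\langle K\rangle_N|$ actually realizes $\vol(K)$---is open for every hyperbolic knot, and in particular is not known for the F\o lner-type and weaving families that furnish our maximal sequences, while the matching upper estimate for fixed $K$ is already available from Garoufalidis--Le \cite{GarLe}. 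An unconditional proof would require an effective lower bound on $\frac{2\pi}{N}\log|\langle K_n\rangle_N|$ matching $(\voct-\epsilon)\,c(K_n)$ for a suitable $N=N(n)$, which appears to demand genuinely new input on the asymptotics of the Kashaev invariant.
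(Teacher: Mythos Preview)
This statement is a \emph{conjecture} in the paper; no proof is given. Immediately after stating it, the paper only remarks that it would suffice to establish the purely diagrammatic identity
\[ \lim_{n\to\infty}\frac{2\pi\log|\langle K_n\rangle_N|^{\frac{1}{N}}}{c(K_n)} \;=\; \lim_{n\to\infty}\frac{2\pi\log\det(K_n)}{c(K_n)} \;=\; \voct, \]
invoking diagrammatic maximality for the right-hand equality.

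Your proposal is likewise not an unconditional proof, and you say so explicitly: you correctly show that the conjecture follows from the Volume Conjecture applied to each $K_n$, via a standard diagonalization that exploits $c(K_n)\to\infty$ to absorb the bounded error $\delta_n$. This is a valid reduction, but a genuinely different one from the paper's. You route through \emph{geometric} maximality, matching the Kashaev growth rate to $\vol(K_n)$; the paper's suggested sufficient condition routes through \emph{diagrammatic} maximality, matching it instead to $\det(K_n)$. Your route reduces the conjecture to a famous open problem; the paper's route has the conceptual appeal of relating only quantum and diagrammatic invariants, with no hyperbolic geometry entering. Note too that your main argument never actually uses the diagrammatic-maximality hypothesis---under the Volume Conjecture it becomes redundant---whereas the paper's sufficient condition uses only the diagrammatic hypothesis and not the geometric one. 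Your $N=2$ computation yielding $\voct/2$ agrees with the paper's subsequent observation that $\tfrac{1}{2}\specdet\subset\specQ$; the only overstatement in your discussion is the claim that ``any bounded or slowly growing choice of $N$ yields at most half of $\voct$,'' which you have verified only for $N=2$.
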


To prove Conjecture \ref{asympvc} it suffices to prove 
\[ \lim_{n\to\infty}\frac{2\pi\log|\langle K_n\rangle_N|^{\frac{1}{N}}}{c(K_n)} \: = \:  \lim_{n\to\infty}\frac{2\pi\log\det(K_n)}{c(K_n)} \: = \: \voct, \]
which relates only diagrammatic invariants.

These ideas naturally suggest an interesting quantum
density spectrum:
\begin{define}\label{def:specQ}
Let $\displaystyle \mathcal{C}_q=\{2\pi\log|\langle
K\rangle_N|^{\frac{1}{N}}/c(K),\, N\geq 2\}$ be the set of quantum
densities for all links $K$ and all $N\geq 2$.  We define $\specQ=
\mathcal{C}'_q$ as its derived set (set of all limit points).
\end{define}

Conjecture \ref{asympvc} would imply that $\voct \in \specQ$.  The
Volume Conjecture would imply that $\specvol \subset \specQ$.

\begin{remark}
For every link $K$ for which the Volume Conjecture holds,
$\displaystyle \frac{\vol(K)}{c(K)}\in\specQ$.  In particular, since the Volume
Conjecture has been proved for torus knots, the figure-eight knot, Whitehead link and Borromean link (see \cite{hitoshi}), we know that
certain rational multiples of volumes of the regular ideal tetrahedron
and octahedron are in $\specQ$; namely, $$\{0,\, \frac{1}{2}v_{\rm tet},\, \frac{1}{5}\voct,\, \frac{1}{3}\voct\} \subset \specQ.$$
\end{remark}

If $N=2$, then $|\langle K\rangle_N|=\det(K)$, so $\frac{1}{2}\specdet \subset \specQ$.  

\ 

Taken together, the results above suggest the following general conjecture:

\begin{conjecture}
$$ \specvol = \specdet = \specQ = [0,\voct]. $$
\end{conjecture}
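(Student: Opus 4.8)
The plan is to prove the three-way equality by establishing, for each spectrum $S\in\{\specvol,\specdet,\specQ\}$ separately, the two containments $S\subseteq[0,\voct]$ and $[0,\voct]\subseteq S$. The structural observation that organizes everything is that each spectrum is closed, being the derived set of a subset of $\R$. Consequently, for the second containment it suffices to exhibit inside each $S\cap[0,\voct]$ a subset that is dense in $[0,\voct]$; closedness then upgrades density to equality. I would therefore concentrate the new work on realizing the one-parameter family of target densities $\lambda\voct$, for $\lambda\in[0,1]\cap\Q$, as genuine limit points of each spectrum, interpolating between the two anchors $0,\voct$, which already lie in $\specvol\cap\specdet$ by Corollary~\ref{cor:v8}.

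The determinant spectrum is the most tractable, and I would treat it first. For fullness, Theorem~\ref{thm:alt}(b) already shows that every value $2\pi\log\det(K)/c(K)$ with $K$ alternating hyperbolic lies in $\specdet$, realized as the limit for the periodic links $K^n$; so fullness reduces to the purely combinatorial claim that the per-edge spanning-tree entropies $2\pi\log\tau(G)/e(G)$ of Tait graphs $G$ of prime alternating hyperbolic links are dense in $[0,\voct]$. To produce the value $\lambda\voct$ I would use \emph{diluted weaves}: reduced alternating diagrams whose Tait graph contains an $m\times m$ grid block occupying a $\lambda$-fraction of the crossings, with the remaining $(1-\lambda)$-fraction placed in a bounded number of long twist regions (two-vertex multi-edges), arranged to keep the diagram prime. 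A twist region of $k$ crossings contributes only $\log k=o(k)$ to $\log\tau$, while by Proposition~\ref{prop:lyons} and the grid computation of \cite{ckp:gmax} the grid block realizes the full grid entropy $\voct/2\pi$ per edge; near-additivity of $\log\tau$ over this bounded block decomposition (exact at cut vertices, with $O(1)$ correction when blocks are glued along $O(1)$ edges to preserve primeness) then yields determinant density $\to\lambda\voct$. The reverse containment $\specdet\subseteq[0,\voct]$ is exactly Conjecture~\ref{conj:diagmax}, equivalent to Kenyon's entropy conjecture for planar graphs and currently open; resolving it is unavoidable, and I regard it as one of the two principal obstacles.

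For $\specvol$ the anchors are again in place by Corollary~\ref{cor:v8}, and Theorem~\ref{thm:alt}(a) supplies many interior values, but I would interpolate geometrically to guarantee density. Mirroring the diluted-weave construction, take hyperbolic alternating links $K_n$ consisting of an $m_n\times m_n$ grid of crossings occupying a $\lambda$-fraction together with $o(c(K_n))$ worth of bounded-diameter twist regions carrying the remaining crossings. The upper bound $\vol(K_n)/c(K_n)\le\voct$ is Equation~(\ref{eq:dylan}), and the matching lower bound is the crux: I need a localized refinement of Theorem~\ref{thm:geommax} asserting that a F\o lner grid block occupying a $\lambda$-fraction of the diagram forces volume at least $(\voct-\epsilon)\lambda\, c(K_n)$, while each twist region contributes only $O(1)$ volume. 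This amounts to re-running the guts and angle-structure lower bound of \cite{ckp:gmax} relative to the grid block rather than the whole link. I expect this localized lower bound to be the main new geometric input, and the primary obstacle on the volume side.

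Finally, for $\specQ$ the $N=2$ specialization gives $\tfrac12\specdet\subseteq\specQ$, so once $\specdet=[0,\voct]$ is in hand this already yields $[0,\tfrac12\voct]\subseteq\specQ$. To fill the upper half $[\tfrac12\voct,\voct]$ I would either invoke the Volume Conjecture, which gives $\specvol\subseteq\specQ$ and hence everything once $\specvol=[0,\voct]$, or, unconditionally, analyze the colored Jones asymptotics of the diluted weaving links directly and choose $N=N(n)\to\infty$ realizing each $\lambda\voct$ in the spirit of Conjecture~\ref{asympvc}. The containment $\specQ\subseteq[0,\voct]$ requires upgrading the Garoufalidis--Le estimate, which controls $\limsup_{N\to\infty}$ for fixed $K$, to a bound uniform over all pairs $(K,N)$ entering a given limit point. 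This uniform bound and the Volume-Conjecture-type input for the upper half, together with Kenyon's conjecture from the determinant side, are the reasons the full equality remains at present only a conjecture; the fullness directions, especially for $\specdet$, are the parts I expect to be within reach of current technology.
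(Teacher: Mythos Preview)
The statement you are attempting to prove is labeled a \emph{Conjecture} in the paper, and the paper offers no proof of it whatsoever; it is the final open problem with which the paper closes. There is therefore nothing to compare your proposal against.

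Your write-up is not a proof but a research outline, and you say so yourself: you identify the containment $\specdet\subseteq[0,\voct]$ as equivalent to Kenyon's conjecture (open), you note that filling $\specQ$ above $\tfrac12\voct$ would need either the Volume Conjecture or an unconditional colored-Jones analysis (open), and you flag the uniform Garoufalidis--Le bound as a further gap. These are exactly the reasons the authors state the result as a conjecture rather than a theorem. Your outline of what would be needed is sensible and correctly locates the obstructions, but it does not constitute a proof, and the paper makes no claim to have one either.
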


\ 

\subsection*{Acknowledgements}
The first two authors acknowledge support by the Simons Foundation and PSC-CUNY. The third author acknowledges support by the National Science Foundation under grant number DMS--1252687.  

\bibliographystyle{amsplain} \bibliography{references}

\end{document}